\renewcommand{\Im}{\textnormal{Im}\,}
\newcommand{\p}{\varphi}
\newcommand{\PP}{ { \R^2 \setminus\{ 0 \} } }
\newcommand{\PS}{ { \Rn \setminus\{ 0 \} } }
\newcommand{\N}{\mathbb{N}}
\newcommand{\R}{\mathbb{R}}
\newcommand{\C}{\mathbb{C}}
\newcommand{\Rn}{ {\mathbb{R}^n} }
\newcommand{\Ball}[3]{B_{#1}(#2,#3)}
\newcommand{\comment}[1]{}
\newtheorem{theorem}[equation]{Theorem}
\newtheorem{lemma}[equation]{Lemma}
\newtheorem{proposition}[equation]{Proposition}
\newtheorem{corollary}[equation]{Corollary}
\theoremstyle{definition}
\newtheorem{remark}[equation]{Remark}
\numberwithin{equation}{section}
\newcounter{minutes}\setcounter{minutes}{\time}
\newcounter{hours}\setcounter{hours}{\time}
\begin{document}

\begin{center}
{\Large \bf Close-to-convexity of quasihyperbolic and $j$-metric balls}
\end{center}
\medskip

\begin{center}
{\large Riku Klén }
\end{center}
\bigskip

\begin{abstract}
  We will consider close-to-convexity of the metric balls defined by the quasihyperbolic metric and the $j$-metric. We will show that the $j$-metric balls with small radii are close-to-convex in general subdomains of $\Rn$ and the quasihyperbolic balls with small radii are close-to-convex in the punctured space.

  \hspace{5mm}

  2000 Mathematics Subject Classification: Primary 30F45, Secondary, 30C65

  Key words: $j$-metric ball, quasihyperbolic ball, close-to-convex domain, local convexity
\end{abstract}
\bigskip
\begin{center}
\texttt{File:~\jobname .tex, \number\year-\number\month-\number\day, \thehours.\ifnum\theminutes<10{0}\fi\theminutes}
\end{center}

\section{Introduction}

Let $G$ be a proper subdomain of the Euclidean space $\Rn$, $n \ge 2$. We define the \emph{quasihyperbolic length} of a rectifiable arc $\gamma \subset G$ by
\[
  \ell_k(\gamma) = \int_{\alpha}\frac{|dz|}{d(z,\partial G)}.
\]
The \emph{quasihyperbolic metric} is defined by
\[
  k_G(x,y) = \inf_\gamma \ell_k(\gamma),\index{hyperbolic metric, $k_G$}
\]
where the infimum is taken over all rectifiable curves in $G$ joining $x$ and $y$. If the domain $G$ is understood from the context we use notation $k$ instead of $k_G$.

The \emph{distance ratio metric} or \emph{$j$-metric} in a proper subdomain $G$ of the Euclidean space $\Rn$, $n \ge 2$, is defined by
\[
  j_G(x,y) = \log \left( 1+\frac{|x-y|}{\min \{ d(x),d(y) \}}\right),
\]
where $d(x)$ is the Euclidean distance between $x$ and $\partial G$. If the domain $G$ is understood from the context we use notation $j$ instead of $j_G$.

The quasihyperbolic metric was discovered by F.W. Gehring and B.P. Palka in 1976 \cite{gp} and the distance ratio metric by F.W. Gehring and B.G. Osgood in 1979 \cite{go}. The above form for the distance ratio metric was introduced by M. Vuorinen \cite{vu1}. For the quasihyperbolic metric the explicit formula is known in a few special domains like half-space, where it agrees with the usual hyperbolic metric, and punctured space, where the formula was introduced by G.J. Martin and B.G. Osgood in 1986 \cite{mo}. They proved that for $x,y \in \PS$ and $n \ge 2$
\begin{equation}\label{moeq}
  k_\PS(x,y) = \sqrt{\alpha^2+\log^2\frac{|x|}{|y|}},
\end{equation}
where $\alpha \in [0,\pi]$ is the angle between line segments $[x,0]$ and $[0,y]$ at the origin.

This paper is a continuation of \cite{k1,k2} and we will study the open problem posed by M. Vuorinen in \cite{vu2} and reformulated in \cite{k3}. The idea is to obtain more knowledge about the geometry of the metric balls defined by the quasihyperbolic and the distance ratio metrics. Before introducing the main results we define the metric ball and some geometric properties of domains.

For a domain $G \subsetneq \Rn$ and a metric $m \in \{ k_G,j_G \}$ we define the \emph{metric ball} (\emph{metric disk} in the case $n = 2$) for $r > 0$ and $x \in G$ by
\[
  \Ball{m}{x}{r} = \{ y \in G \colon m(x,y) < r \}.
\]
We call $\Ball{k}{x}{r}$ the \emph{quasihyperbolic ball} and $\Ball{j}{x}{r}$ the \emph{$j$-metric ball}. A domain $G \subsetneq \Rn$ is \emph{starlike with respect to} $x \in G$ if for all $y \in G$ the line segment $[x,y]$ is contained in $G$ and $G$ is \emph{strictly starlike with respect to} $x$ if each half-line from the point $x$ meets $\partial G$ at exactly one point. If $G$ is (strictly) starlike with respect to all $x \in G$ then it is \emph{(strictly) convex}. A domain $G \subsetneq \Rn$ is \emph{close-to-convex} if $\Rn \setminus G$ can be covered with non-intersecting half-lines. By half-lines we mean sets $\{ x \in \Rn \colon x = t y+z, \, t > 0 \}$ and $\{ x \in \Rn \colon x = t y+z, \, t \ge 0 \}$ for $z \in \Rn $ and $y \in \PS$. Clearly convex domains are starlike and starlike domains are close-to-convex as well as complements of close-to-convex domains are starlike with respect to infinity. However, close-to-convex sets need not be connected. An example of a close-to-convex disconnected set is the union of two disjoint convex domains. We use notation $B^n(x,r)$ and $S^{n-1}(x,r)$ for Euclidean balls and spheres, respectively, with radius $r > 0$ and center $x \in \Rn$. We abbreviate $B^n(r) = B^n(0,r)$, $B^n = B^n(1)$, $S^{n-1}(r)=S^{n-1}(0,r)$ and $S^{n-1}=S^{n-1}(1)$.

The study of local convexity properties of metric balls for hyperbolic type metrics was initiated by M. Vuorinen. He posed \cite{vu2} the following problem for hyperbolic type metrics $m$ in a domain $G \subsetneq \Rn$:

\vspace{5mm}
\noindent Does there exist a radius $r_0 > 0$ such that $\Ball{m}{x}{r}$ is convex (in Euclidean geometry) for all $x \in G$ and $r \in (0,r_0]$?
\vspace{5mm}

\noindent If such a radius exists we call it the \emph{radius of convexity for the metric ball}. O. Martio and J. Väisälä showed \cite{mv} that quasihyperbolic balls are always convex in convex domains. Soon after it was shown \cite{k2} that quasihyperbolic balls are convex in punctured space $\Rn \setminus \{ 0 \}$ whenever the radius is less than or equal to 1. Recently J. Väisälä proved \cite{v2} that in the case $n=2$ the quasihyperbolic disks of every plane domain are convex whenever the radius is less than or equal to 1. The $j$-metric balls in a general domain are convex whenever $r \le \log 2$ and they are always convex in convex domains \cite{k1}.

It is natural to ask whether we can replace the convexity by starlikeness or some other geometric property. It turns out that the quasihyperbolic balls and the $j$-metric are always starlike with respect to $x$ in domains that are starlike with respect to $x$ \cite{k1,k2}. In a general domain the $j$-metric balls $\Ball{j}{x}{r}$ are starlike with respect to $x$ whenever $r \le \log (1+\sqrt{2})$, \cite{k1} and the quasihyperbolic balls $\Ball{k}{x}{r}$ are starlike with respect to $x$ if $r \le \pi/2$, \cite{v1}. The latter bound is presumably not sharp and in the punctured space the quasihyperbolic balls $\Ball{k}{x}{r}$ are starlike with respect to $x$ whenever $r \le \kappa$, where $\kappa \approx 2.83$, \cite{k2}.

In this paper we will concentrate on the close-to-convexity of the quasihyperbolic and $j$-metric balls. For the quasihyperbolic and the $j$-metric balls the radius of convexity and starlikeness are independent of the dimension $n$. However, the radius of close-to-convexity is different for the cases $n=2$ and $n>2$. Our main result is the following theorem.
\begin{theorem}\label{mainthm}
  For a domain $G \subsetneq \Rn$ and $x \in G$ the $j$-metric ball $\Ball{j}{x}{r}$ is close-to-convex, if $r \in (0, \log(1+\sqrt{3})]$.

  For $y \in \PS$ the quasihyperbolic ball $\Ball{k}{y}{r}$ is close-to-convex, if $r \in (0,\lambda]$, where $\lambda$ is defined by (\ref{lambda}) and has a numerical approximation $\lambda \approx 2.97169$.

  Moreover, the constants $\log(1+\sqrt{3})$ and $\lambda$ are sharp in the case $n = 2$.
\end{theorem}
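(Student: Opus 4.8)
\section*{Proof proposal}

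The plan is to work throughout from the definition given in the introduction: $\Ball{m}{x}{r}$ is close-to-convex exactly when its complement $\Rn\setminus\Ball{m}{x}{r}$ admits a covering by non-intersecting half-lines. In the plane this has a convenient differential reformulation that I would use as the main tool: for a Jordan domain with piecewise-smooth boundary, such a covering exists if and only if the tangent direction of the boundary, traversed with the domain on the left, never rotates clockwise by a net amount of $\pi$ or more over any sub-arc (corners counting with their jumps). When the domain happens to be a conformal image $f(B^2)$, this is Kaplan's criterion
\[
  \int_{\theta_1}^{\theta_2} \mathrm{Re}\!\left( 1 + \frac{\zeta f''(\zeta)}{f'(\zeta)} \right) d\theta > -\pi , \qquad \zeta = \rho e^{i\theta},\ \rho<1 ,
\]
required for all arcs. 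I would handle the two metrics separately, settle the sharp planar case $n=2$ first, and then lift to $\Rn$ via the rotational symmetry of the extremal configurations.

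For the quasihyperbolic ball in $\PS$ I would exploit the explicit formula (\ref{moeq}). Placing $y$ on a coordinate axis and writing points in the plane through $0$ and $y$ as $z=|y|e^{w}$ with $w=t+i\alpha$, the identity $k_{\PS}=\sqrt{\alpha^2+\log^2(|z|/|y|)}$ becomes simply $\sqrt{t^2+\alpha^2}=|w|$. Hence $\Ball{k}{y}{r}$ is precisely the image of the Euclidean disk $\{|w|<r\}$ under the exponential map $w\mapsto|y|e^{w}$; since $r\le\lambda<\pi$ this map is injective and the image is a Jordan domain in $\PS$. Close-to-convexity in the plane therefore reduces to close-to-convexity of $f(w)=e^{w}$ on a disk of radius $r$. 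Here $1+wf''/f'=1+w$, so on the bounding circle the Kaplan integrand is $1+r\cos\theta$; the minimizing arc is the one on which this is negative, and setting the corresponding integral equal to $-\pi$ yields an equation equivalent to (\ref{lambda}), namely $\arccos(-1/\lambda)+\sqrt{\lambda^2-1}=\frac{3\pi}{2}$, whose root is $\lambda\approx2.97169$. Run as an equality, the same computation exhibits the boundary arc along which the tangent turns clockwise by exactly $\pi$, which is the sharp planar example.

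For the $j$-metric ball in a general $G$ I would split $\Ball{j}{x}{r}$ according to whether $d(y)\ge d(x)$ or $d(y)<d(x)$. On $\{d(y)\ge d(x)\}$ the condition $j_G(x,y)<r$ is just $|x-y|<(e^{r}-1)\,d(x)$, a piece of a Euclidean ball and harmless; on $\{d(y)<d(x)\}$ it is $|x-y|<(e^{r}-1)\,d(y)$, and this is where $\partial G$ enters. Using that $d$ is $1$-Lipschitz and that a single boundary point is the most strongly concave admissible boundary feature, I would reduce the worst case to the complement of a point $\PS$; this is consistent with the known thresholds $r\le\log2$ for convexity and $r\le\log(1+\sqrt2)$ for starlikeness, both attained there. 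In $\PS$ with $|x|=1$ the inner boundary $|x-y|=(e^{r}-1)|y|$ is an Apollonius sphere (concave towards the ball once $e^{r}-1>1$), and the whole ball is bounded by this sphere together with an outer Euclidean sphere. I would then construct an explicit family of non-intersecting half-lines covering the complement and locate the largest radius for which the family stays non-intersecting; the pattern $e^{r}-1\in\{1,\sqrt2,\sqrt3\}$ for the three successive properties indicates that the critical tangency occurs at $e^{r}-1=\sqrt3$, that is $r=\log(1+\sqrt3)$, which also furnishes the sharp planar example.

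The main obstacle is twofold. First, for the $j$-metric the reduction to the extremal domain is delicate: because $d(y)$ enters $j_G$ nonlinearly and only through the nearest boundary point, one must show rigorously that no admissible $\partial G$ produces a ball less close-to-convex than the one in $\PS$ at the same radius, and I expect this to require a careful comparison of the inner Apollonius boundary against the boundary arising from an arbitrary $\partial G$. Second, both statements are established in the plane and must be promoted to $\Rn$. The extremal configurations are rotationally symmetric about the axis through the two relevant points, so every meridian section is the planar close-to-convex domain already analysed; the difficulty is to assemble the planar coverings into one family of non-intersecting half-lines in $\Rn$ without collisions near the axis. Since sharpness is asserted only for $n=2$, the higher-dimensional covering may be chosen more wastefully, but verifying that it exists over the full ranges up to $\log(1+\sqrt3)$ and $\lambda$ is where the bulk of the technical work lies.
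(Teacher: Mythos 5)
Your quasihyperbolic argument in the plane is correct and takes a genuinely different route from the paper's. The paper parametrizes the upper half of $\partial \Ball{k}{x}{r}$ via (\ref{moeq}) as $s\mapsto(e^s\cos\p(s),e^s\sin\p(s))$ with $\p(s)=\sqrt{r^2-s^2}$ and determines the largest $r$ for which the tangent becomes horizontal exactly once, i.e.\ $b(-1)=0$, which is (\ref{lambda}); you instead identify $\Ball{k}{y}{r}$ as the univalent image of a round disk under $w\mapsto|y|e^{w}$ (valid since $r\le\pi$) and apply Kaplan's criterion to $f(\zeta)=e^{r\zeta}$, where $1+\zeta f''/f'=1+r\zeta$. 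Your equation $\arccos(-1/\lambda)+\sqrt{\lambda^2-1}=3\pi/2$ is indeed equivalent to (\ref{lambda}): both say $\sin\sqrt{\lambda^2-1}=1/\lambda$ with $\sqrt{\lambda^2-1}\in(\pi/2,\pi)$. Two loose ends are minor: sharpness also requires the range $r>\pi$ (there $\R^2\setminus\overline{\Ball{k}{x}{r}}$ has a bounded component, so the disk is not close-to-convex), and the passage to $\Rn$, which you defer as ``the bulk of the technical work,'' is actually short --- it is the paper's Lemma \ref{symmetrylemma}: choose each planar cover so that any half-line meeting the rotation axis lies inside the axis, then rotate the cover.

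The genuine gap is in the $j$-metric statement for general $G$. You propose to reduce to the extremal domain $\PS$ on the grounds that a single boundary point is the most concave admissible boundary feature. This reduction is not merely ``delicate''; it is false for $n\ge3$, by an example contained in the paper itself (part (3) of the closing remark of Section 2): for $G=\R^3\setminus\{a+bi,\,a-bi\}$ with $a=\sqrt{1+1/\sqrt{2}}$, $b=\sqrt{1-1/\sqrt{2}}$, the ball $\Ball{j}{0}{r}$ fails to be close-to-convex for $r>\log(1+\sqrt{2+\sqrt{2}})\approx1.047$, whereas in $\R^3\setminus\{0\}$ close-to-convexity holds for all $r\le\log 3\approx1.099$. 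So two punctures can be strictly worse than one: the half-line families that work for each puncture separately collide with each other. The structural reason is that close-to-convexity, unlike convexity, is not preserved under intersections, while by (\ref{intersection}) the ball $\Ball{j}{x}{r}$ is precisely $\bigcap_{z\in\partial G}\Ball{j_{\Rn\setminus\{z\}}}{x}{r}$; hence close-to-convexity of each single-puncture ball (your Theorem \ref{jforPS}-type input) by itself yields nothing, in any dimension, including $n=2$.

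What is missing is an argument covering the complement of the whole configuration at once, and that is exactly what the paper supplies. By (\ref{intersection}) and the explicit formulas (\ref{r0})--(\ref{ri}), one writes $\Ball{j}{x}{r}=B^n(x,r_0)\setminus\bigcup_i\overline{B^n}(x_i,r_i)$, and Lemma \ref{balllemma} shows such a set is close-to-convex provided every removed ball satisfies $|x_i|\ge r_0/\sqrt{2}$, $r_i<|x_i|$, and $r_i^2+|x_i|^2\ge r_0^2$ whenever $|x_i|<r_0$ (distances measured from the center $x$); these conditions are then verified to hold exactly when $r\le\log(1+\sqrt{3})$, uniformly over all boundary points. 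Your tangency heuristic $e^{r}-1=\sqrt{3}$ does locate the sharp planar constant --- it is the paper's equation (\ref{pythagorascondition}) for $\PP$ --- so the $n=2$ sharpness claim is recoverable, but the general-domain claim needs a simultaneous covering lemma of this kind, not a comparison with the single worst puncture.
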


\section{Close-to-convexity of $j$-metric balls}

In this section we will consider the close-to-convexity of $j$-metric balls. We will begin with a simple domain $\PS$ and extend the result to a general subdomain of $\Rn$.
\begin{theorem}\label{jforPS}
  Let $G = \PS$ and $x \in G$. For $r \in (0,\log(1+\sqrt{3})]$ the $j$-metric ball $\Ball{j}{x}{r}$ is close-to-convex and in the case $n = 2$ for $r > \log(1+\sqrt{3})$ it is not close-to-convex.
\end{theorem}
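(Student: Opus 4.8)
The plan is to understand the geometry of the $j$-metric ball $\Ball{j}{x}{r}$ in the punctured space $\PS$ explicitly enough to exhibit a covering of the complement by disjoint half-lines. The key is that the $j$-metric distinguishes two cases depending on which of $d(x)$, $d(y)$ is smaller, and in $\PS$ the distance to the boundary is simply $d(z) = |z|$. So first I would fix $x$ and write out the defining inequality $j(x,y) < r$, i.e. $|x-y| < (e^r-1)\min\{|x|,|y|\}$. This splits the ball into the region where $|y| \ge |x|$ (governed by $|x-y| < (e^r-1)|x|$, the exterior of an ordinary Euclidean ball intersected with $\{|y|\ge|x|\}$) and the region where $|y| < |x|$ (governed by $|x-y| < (e^r-1)|y|$, an Apollonius-type condition). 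The boundary of the ball is therefore piecewise built from a sphere centered at $x$ and an Apollonius sphere around the origin, meeting on the sphere $S^{n-1}(|x|)$.

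Having this explicit description, the second step is to produce the half-line covering of the complement $\Rn \setminus \Ball{j}{x}{r}$ and verify the half-lines are pairwise disjoint. By symmetry I would take $x = |x| e_1$ on a coordinate axis and use rotational symmetry about that axis to reduce to the planar picture. The natural family of half-lines to try are those emanating radially from the origin (or from a suitable point on the axis): the complement of the ball, being "far" from $x$ in the $j$-metric, should be sweepable by rays that each hit the complement in a connected ray and never cross inside the ball. The main verification is that each such half-line meets the ball in at most a single segment, equivalently that along each line the function $y \mapsto j(x,y)$ restricted to the ray behaves monotonically enough that the set $\{j(x,y)\ge r\}$ on the ray is connected. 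This is exactly the content that makes the covering half-lines non-intersecting.

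The sharpness claim for $n=2$ and $r > \log(1+\sqrt{3})$ is where I expect the real work. I would identify the precise geometric event that destroys close-to-convexity at $r = \log(1+\sqrt{3})$: as $r$ grows, the Apollonius part of the boundary (the locus $|x-y| = (e^r-1)|y|$) bulges toward and eventually wraps around the origin, or the two boundary pieces develop a reflex configuration relative to the origin, so that the complement can no longer be covered by disjoint half-lines. The constant $1+\sqrt{3}$ should emerge from a tangency/degeneracy condition — most plausibly the value of $r$ at which a half-line forced through a complement point is compelled to re-enter the ball, i.e. at which the ball fails to be "seen" from infinity along some direction without obstruction. Concretely I would compute the critical radius by setting up the condition that the inner boundary arc and the excluded origin force two would-be covering rays to cross, and solve for $r$; one expects $e^r - 1 = \sqrt{3}$, giving $r = \log(1+\sqrt{3})$.

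The hard part will be the sharpness direction rather than the construction: pinning down \emph{exactly} which geometric obstruction appears first as $r$ increases past the threshold, and showing rigorously that \emph{no} disjoint half-line covering exists beyond it (a non-existence statement, which is genuinely harder than exhibiting one covering). I would handle this by locating a specific pair of complement points whose only candidate covering half-lines are forced to intersect for $r>\log(1+\sqrt{3})$, using the explicit planar formulas for the two boundary arcs and the excluded point $0$; the value $\log(1+\sqrt{3})$ should fall out of the algebra of that forced intersection, and I would double-check it against the numerical behavior of the boundary curve near the axis.
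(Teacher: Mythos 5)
Your first step coincides with the paper's: normalize $x = e_1$ and describe the ball explicitly as $\Ball{j}{x}{r} = B^n(e_1,e^r-1)\setminus \overline{B^n}(c,s)$, where $\overline{B^n}(c,s)$ is a closed Apollonius ball centered on the negative $e_1$-axis and containing the origin (for $r>\log 2$). After that there is a genuine gap: the one covering family you actually name --- half-lines ``emanating radially from the origin'' --- cannot work, because the origin lies inside the removed ball. On the ray from $0$ through $e_1$, for instance, the complement consists of a bounded segment inside $\overline{B^n}(c,s)$, then the crescent, then an unbounded ray; the bounded piece is not contained in any half-line along that ray that avoids the ball. The covering that does work, and the criterion your proposal never identifies, uses half-lines parallel to the $e_1$-axis: it succeeds precisely when at most half of the removed ball is submerged in the big ball, i.e.\ when $s^2+|e_1-c|^2 \ge (e^r-1)^2$. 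In that case the far hemisphere of $S^{n-1}(c,s)$ lies outside $B^n(e_1,e^r-1)$, so every line parallel to $e_1$ meets $\Ball{j}{x}{r}$ in a single (possibly empty) segment, and the complement decomposes into pairwise disjoint half-lines along these lines. Equality in this Pythagorean condition is exactly the paper's equation (\ref{pythagorascondition}) and solves to $e^{r_0}=1+\sqrt{3}$. This half-submersion condition, not any tangency or wrapping involving the origin, is the source of the constant: the Apollonius ball always encloses the origin, so your guesses (``bulges toward and eventually wraps around the origin'', ``reflex configuration relative to the origin'') point at the wrong geometric event, and your formula $e^r-1=\sqrt{3}$ is a restatement of the claimed threshold rather than a derivation of it.

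For sharpness ($n=2$, $r>\log(1+\sqrt{3})$), your final idea --- locate two complement points whose admissible half-lines are forced to cross --- is the right shape of argument (the paper compresses it into ``by geometry \dots if and only if''). The correct pair is $c\pm s e_2$, the points of $S^{1}(c,s)$ where the radius is orthogonal to the line of centers: beyond the threshold they lie strictly inside $B^2(e_1,e^r-1)$, so any half-line through either of them that stays in the complement must enter the interior of the removed disk and leave it through the small arc lying outside the big disk, and an intermediate-value argument then forces the half-line through $c+se_2$ to meet the one through $c-se_2$. Note that for $\log(1+\sqrt{3})<r<\log 3$ every individual complement point still admits some valid half-line; the obstruction is only to choosing them pairwise disjoint, so any correct non-existence proof must argue about pairs, as you anticipated, not about a single ``forced re-entry'' point. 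Finally, be aware that this failure is strictly two-dimensional: the paper's remark shows that in $\R^3$ the ball remains close-to-convex up to $r=\log 3$ via coverings by hyperboloids of one sheet, so a rotational-symmetry reduction to the planar section can prove the positive direction for all $n$ but can never prove sharpness for $n>2$ --- which is exactly why the theorem asserts sharpness only for $n=2$.
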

\begin{proof}
  By symmetry of $G$ we may assume $x = e_1$. By \cite[Theorem 1.1]{k1} $\Ball{j}{x}{r}$ is starlike with respect to $x$ and therefore also close-to-convex for $r \le \log (1+\sqrt{2})$. Therefore we assume $r > \log (1+\sqrt{2})$. Now by the definition of the $j$-metric \cite[proof of Theorem 3.1]{k1}
  \[
    \Ball{j}{x}{r} = B^n(e_1,e^r-1) \setminus \overline{B^n} (c,s),
  \]
  where
  \[
    c = \frac{e_1}{e^r(2-e^r)} \quad \textrm{and} \quad s = \frac{e^r-1}{e^r(e^r-2)}.
  \]
  By geometry $\Ball{j}{x}{r}$ is close-to-convex if and only if $r \le r_0$, where $r_0$ is such that
  \begin{equation}\label{pythagorascondition}
    \left( \frac{e^{r_0}-1}{e^{r_0}(e^{r_0}-2)} \right)^2 +\left( 1-\frac{1}{e^{r_0}(2-e^{r_0})} \right)^2 = (e^{r_0}-1)^2.
  \end{equation}
  Equality (\ref{pythagorascondition}) is equivalent to $r_0 = \log (1+\sqrt{3})$ and the assertion follows.
\end{proof}

\begin{remark}
  Let us consider the domain $G = \PS$. Theorem \ref{jforPS} gives a sharp radius of close-to-convexity for $j$-metric disks in the case $n = 2$. In the case $n > 2$ the radius $\log (1+\sqrt{3})$ is presumably not sharp.

  We show that for $r > \log 3$ the $j$-metric balls $\Ball{j}{x}{r}$ are not close-to-convex. By geometry we may assume $x = e_1$ and by the proof of Theorem \ref{jforPS} we know that $\Ball{j}{x}{r} = B^n(x,e^r-1) \setminus \overline{B^n} (c,s)$ for some $c \in G$ and $s > 0$. If $r > \log 3$, then $e^r-1 > 2$, $c \in (-e_1/3,0)$ and $s \in (0,2/3)$. Therefore, $B^n (c,s) \subset B^n(x,e^r-1)$ and $j$-metric balls cannot be close-to-convex for $r > \log 3$.

  We show next that $j$-metric balls $\Ball{j}{x}{r}$ are close-to-convex in $G = \R^3 \setminus \{ 0 \}$ whenever $r \le \log 3$. Fix $x \in G$ and $r \in (\log(1+\sqrt{3}),\log 3]$. By the proof of Theorem \ref{jforPS} we know that $\Ball{j}{x}{r} = B_1 \setminus \overline{B_2}$ for Euclidean balls $B_1$ and $B_2$. We cover $\R^3 \setminus \Ball{j}{x}{r}$ with non-intersecting half-lines and we begin with $\overline{B_2}$. We cover $\overline{B_2}$ with non-intersecting half-lines consisting of cut hyperboloids of one sheet with semi-minor axis $\{ z = t e_1, \, t \in \R \}$. The rest of $\R^3 \setminus \Ball{j}{x}{r}$ can be covered with non-intersecting half-lines orthogonal to $\{ z = t e_1, \, t \in \R \}$. Since $\R^3 \setminus \Ball{j}{x}{r}$ can be covered with non-intersecting half-lines, the $j$-ball $\Ball{j}{x}{r}$ is close-to-convex.
\end{remark}

The following lemma gives an example of close-to-convex domains.
\begin{lemma}\label{balllemma}
  Let $r > 0$ and $x_i \in \Rn$, $r_i > 0$ be such that $|x_i| \ge r/\sqrt{2}$, $r_i < |x_i|$ and $\sqrt{r_i^2+|x_i|^2} \ge r$ whenever $|x_i| < r$. Then
  \[
    B = B^n(0,r) \setminus \bigcup_{i=1}^\infty \overline{B_i}
  \]
  is close-to-convex and connected, where $B_i = B^n(x_i,r_i)$.
\end{lemma}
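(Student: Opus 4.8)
The plan is to translate the conclusion directly into the defining condition: $B$ is close-to-convex precisely when its complement
\[
  C = \Rn \setminus B = \{\, x : |x| \ge r \,\} \cup \bigcup_{i=1}^\infty \overline{B_i}
\]
can be written as a disjoint union of half-lines, each of which is \emph{contained} in $C$ (containment is essential, since otherwise any closed set is trivially ``covered''). Since every such half-line is unbounded, a first necessary observation is that no $\overline{B_i}$ may be an isolated bounded component of $C$. I would check this is guaranteed by the hypotheses: when $|x_i| < r$, the condition $\sqrt{r_i^2+|x_i|^2}\ge r$ forces $|x_i|+r_i \ge r$ (if $|x_i|+r_i<r$ then $(|x_i|+r_i)^2<r^2\le r_i^2+|x_i|^2$ gives $2r_i|x_i|<0$), so $\overline{B_i}$ meets $\{|x|\ge r\}$; when $|x_i|\ge r$ this is immediate. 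Hence each ball is attached to the outer shell, $C$ is connected, and there are no trapped bounded pieces.

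The geometric heart is the covering of a single ball, and here I would \emph{not} use rays from the origin: a radial ray entering $\overline{B_i}$ near the boundary of its tangent cone exits at distance $\sqrt{|x_i|^2-r_i^2}<r$, leaving a gap back to the sphere that lies in $B$, so radial half-lines cannot be contained in $C$. Instead I would foliate $\overline{B_i}$ by the chords parallel to the central direction $e=\hat x_i$ and extend each outward as a half-line in direction $e$. Writing a chord at perpendicular offset $w\perp e$, $|w|\le r_i$, its outer endpoint is $p=\bigl(|x_i|+\sqrt{r_i^2-|w|^2}\bigr)e+w$; putting $h=\sqrt{r_i^2-|w|^2}$ one computes
\[
  |p|^2 = (|x_i|+h)^2 + r_i^2 - h^2 = |x_i|^2 + r_i^2 + 2|x_i|h \ge |x_i|^2+r_i^2 \ge r^2 .
\]
Thus every chord exits at distance $\ge r$ (the minimum, at the equator $h=0$, is exactly where $\sqrt{r_i^2+|x_i|^2}\ge r$ is used), and since $\langle p,e\rangle=|x_i|+h>0$ the continuation stays in $\{|x|\ge r\}$. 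So these half-lines lie in $C$, are pairwise disjoint, and cover $\overline{B_i}$ together with a half-cylinder beyond it. This is exactly where the three hypotheses $|x_i|\ge r/\sqrt2$, $r_i<|x_i|$ and $\sqrt{r_i^2+|x_i|^2}\ge r$ earn their keep.

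I expect the real obstacle to be the \textbf{global assembly}: merging the per-ball parallel families with a covering of the remaining exterior into one pairwise-disjoint family of \emph{full} half-lines. Radial exterior rays are interrupted by the cylinders, and offset half-lines inevitably cross radial ones, so no naive splicing works. My plan is to construct a single continuous foliation of $C$ by half-lines that is radial near infinity but bends into the central direction as it passes through each ball — the higher-dimensional analogue of the one-sheet-hyperboloid rulings used for $n=3$ in the Remark — and to prove that leaves never cross by exploiting the monotonicity of the exit radius $|p|^2=|x_i|^2+r_i^2+2|x_i|h$ in $h$. The genuinely delicate point is overlapping balls, which the hypotheses permit: there I would treat a maximal overlapping cluster as a unit and select a common family of non-crossing chords whose outward extensions still clear the sphere, which is feasible because each ball individually satisfies the equatorial bound.

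Finally, connectivity of $B$ I would dispatch separately and briefly: each removed set $\overline{B_i}$ is convex, excludes the origin, and reaches the sphere $S^{n-1}(0,r)$, so the holes are ``boundary bites'' that cannot enclose an interior component. Concretely, since $0\in B$, I would connect an arbitrary $y\in B$ to $0$ by deforming the segment $[0,y]\subset B^n(0,r)$ around the convex holes, which is possible exactly because no $\overline{B_i}$ surrounds the origin; this shows $B$ is path-connected.
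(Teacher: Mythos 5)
Your preliminary work is correct and, notably, already contains more mathematics than the paper itself, whose entire proof of Lemma \ref{balllemma} is the sentence that the assertion is ``clear by geometry and the selection of points $x_i$ and radii $r_i$.'' Your insistence that the covering half-lines must lie \emph{in} the complement, your derivation of $|x_i|+r_i\ge r$, your observation that radial rays fail, and the chord computation $|p|^2=|x_i|^2+r_i^2+2|x_i|h\ge r^2$ are exactly the geometric content that sentence suppresses. But the proposal stops being a proof precisely where you say it does: no pairwise-disjoint global family is ever produced, and the two devices invoked for the hard part --- a foliation that ``bends into the central direction'' and treating ``a maximal overlapping cluster as a unit'' --- are names for constructions, not constructions. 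Worse, your concluding connectivity paragraph contains a false step rather than a gap: it is not true that convex holes which avoid the origin and reach the sphere are ``boundary bites'' that cannot enclose an interior component; two \emph{overlapping} such holes, together with the sphere, can.

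In fact the assembly cannot be completed by you or anyone, because the lemma as stated is false, and it fails exactly at the overlapping balls you flagged as the delicate point. Take $n=2$, $r=1$, $r_1=r_2=3/4$, $x_1=0.8\,(1,0)$ and $x_2=0.8\,(\cos110^\circ,\sin110^\circ)$. The hypotheses hold: $0.8\ge 1/\sqrt2$, $0.75<0.8$, and $\sqrt{0.5625+0.64}\approx1.097\ge1$. The balls overlap, since $|x_1-x_2|=1.6\sin55^\circ\approx1.31<3/2$; their two intersection points lie on the bisecting ray at distances $0.8\cos55^\circ\pm\sqrt{0.5625-0.64\sin^2 55^\circ}\approx 0.094$ and $0.824$ from the origin, both inside $B^2(0,1)$; and each $\overline{B_i}$ crosses the unit circle, covering the arc of angular radius $\approx47.7^\circ$ about its center's direction, which leaves the arc $47.7^\circ<\theta<62.3^\circ$ uncovered. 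The arc of $\partial B_1$ from the outer intersection point $T\approx0.824\,(\cos55^\circ,\sin55^\circ)$ to the unit circle, the arc of the unit circle across that gap, and the arc of $\partial B_2$ back to $T$ then form a Jordan curve contained in $\R^2\setminus B$ which winds once around $q=0.95\,(\cos55^\circ,\sin55^\circ)$ and not around $0$; since $|q-x_1|=|q-x_2|\approx0.819>3/4$, both $q$ and $0$ lie in $B$. So $B$ has a ``pocket'' component trapped between the two balls and the sphere, and $B$ is disconnected, contradicting the lemma. Nor is this an artifact of over-generous hypotheses: the same pocket occurs for the Apollonian balls in the paper's own application --- two boundary points equidistant from $x$ subtending an angle slightly above $60^\circ$ make $\Ball{j}{x}{r}$ disconnected already at $r=\log(1+\sqrt3)$, and angles near the tangency angle do so for every $r>\log(1+\sqrt2)$ --- so Theorem \ref{jballsinG} inherits the defect. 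The honest conclusion of your analysis is therefore stronger than you state it: the per-ball covering works, the global assembly provably cannot, and the paper's ``clear by geometry'' conceals a counterexample rather than a proof.
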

\begin{proof}
  The assertion is clear by geometry and the selection of points $x_i$ and radii $r_i$.
\end{proof}

Now we are ready to find the radius of close-to-convexity of $j$-metric balls in a general domain.

\begin{theorem}\label{jballsinG}
  Let $G \subsetneq \Rn$ be a domain and $x \in G$. For $r \in (0,\log(1+\sqrt{3})]$ the $j$-metric ball $\Ball{j}{x}{r}$ is close-to-convex and connected.
\end{theorem}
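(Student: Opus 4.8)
The plan is to reduce the general case to the structure already established for the punctured space. First I would recall from the proof of Theorem~\ref{jforPS} that for the range $r \le \log(1+\sqrt{3})$ the $j$-metric ball admits an explicit description as a difference of two Euclidean balls. The key observation is that the $j$-metric depends on the domain $G$ only through the single scalar quantity $d(x) = d(x,\partial G)$; indeed, $j_G(x,y)$ is a function of $|x-y|$ and $\min\{d(x),d(y)\}$, and for $y$ near $x$ the relevant distance-to-boundary is governed by the nearest boundary point. So I would first show that, after a similarity (translation, rotation, and scaling), the $j$-metric ball $\Ball{j}{x}{r}$ in a general domain $G$ is contained in, and in fact coincides locally with, the ball one obtains by treating the single nearest boundary point as the whole boundary.

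\medskip

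The core of the argument is to write $\Ball{j}{x}{r}$ explicitly. For a point $y$ with $d(y) \ge d(x)$ the condition $j_G(x,y) < r$ reduces to $|x-y| < (e^r-1)\,d(x)$, giving an outer Euclidean ball $B^n(x,(e^r-1)d(x))$. For $y$ with $d(y) < d(x)$ the binding constraint involves $d(y)$, and here I would use that $d(y) \le d(x,\partial G) + |x-y|$ together with the worst-case realization where $\partial G$ is a single point (or hyperplane) nearest to $x$. Carrying this through, exactly as in the punctured-space computation, shows that $\Ball{j}{x}{r}$ is sandwiched between $B^n(x,(e^r-1)d(x))$ with a family of excluded balls removed, each excluded ball being a translate/scaling of the $\overline{B^n}(c,s)$ from Theorem~\ref{jforPS}, one for each nearest boundary point contributing the small-distance constraint. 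The point is that the outer ball has radius $(e^r-1)d(x)$ and each removed ball $\overline{B_i} = B^n(x_i,r_i)$ has center and radius satisfying precisely the hypotheses $|x_i| \ge r/\sqrt{2}$, $r_i < |x_i|$, and $\sqrt{r_i^2 + |x_i|^2} \ge r$ of Lemma~\ref{balllemma}, after rescaling so that the outer radius equals $r$; this is exactly the content of the sharp computation~(\ref{pythagorascondition}) at $r_0 = \log(1+\sqrt{3})$.

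\medskip

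With that structural description in hand, close-to-convexity and connectedness follow immediately from Lemma~\ref{balllemma}: the $j$-metric ball is a Euclidean ball with a collection of smaller closed balls removed, and the geometric hypotheses on the centers and radii are exactly what Lemma~\ref{balllemma} requires to conclude that the resulting set is close-to-convex and connected. So the final step is just an application of the lemma once the excluded balls are identified and the scaling is normalized.

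\medskip

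The main obstacle I anticipate is verifying that the hypotheses of Lemma~\ref{balllemma} hold \emph{uniformly} over all boundary configurations, i.e.\ that every excluded ball arising from the full boundary $\partial G$ (not just the single nearest point treated in the punctured-space model) satisfies $|x_i| \ge r/\sqrt{2}$ and $\sqrt{r_i^2+|x_i|^2} \ge r$. Because the $j$-metric constraint from a boundary point farther than the nearest one produces a smaller excluded ball that is more deeply interior, one expects these farther points only to remove \emph{less}, so the critical case is genuinely the nearest boundary point, already handled by~(\ref{pythagorascondition}). Making this monotonicity precise — that the nearest boundary point gives the tightest (largest) excluded ball and hence the binding constraint in Lemma~\ref{balllemma} — is the delicate part; once it is established, the reduction to the punctured-space calculation of Theorem~\ref{jforPS} and the application of Lemma~\ref{balllemma} complete the proof.
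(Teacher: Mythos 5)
Your skeleton is the same as the paper's own proof: decompose $\Ball{j}{x}{r}$ as an outer Euclidean ball minus a union of excluded closed balls, one for each boundary point, and then invoke Lemma~\ref{balllemma}. The clean route to this decomposition, which you gesture at but never establish, is the intersection formula \cite[(4.1)]{k1}, namely $\Ball{j}{x}{r}=\bigcap_{z\in\partial G}\Ball{j_{\Rn\setminus\{z\}}}{x}{r}$; combined with the set identity $\bigcap_z(A_z\setminus C_z)=\bigl(\bigcap_z A_z\bigr)\setminus\bigl(\bigcup_z C_z\bigr)$ and the punctured-space formula from Theorem~\ref{jforPS}, it gives $\Ball{j}{x}{r}=B^n(x,r_0)\setminus\bigcup_i\overline{B^n}(x_i,r_i)$ with $r_0=d(x)(e^r-1)$ and, for the excluded ball coming from a boundary point $z$ at distance $c=|x-z|\ge d(x)$, center distance $|x_i|=c\bigl(1+\frac{1}{e^r(e^r-2)}\bigr)$ and radius $r_i=c\,\frac{e^r-1}{e^r(e^r-2)}$ (measured from $x$). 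Note also that this description requires $e^r>2$, which is why the paper first disposes of $r\le\log(1+\sqrt{2})$ via starlikeness \cite[Theorem 1.1]{k1}; your proposal skips that reduction.

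The genuine gap is precisely the step you defer as ``the delicate part,'' and the monotonicity you propose in order to close it is false. Since $|x_i|$ and $r_i$ both scale linearly in $c$, a farther boundary point produces a \emph{larger} excluded ball centered \emph{farther} from $x$ (its near edge lies at distance $|x_i|-r_i=c(e^r-1)/e^r$ from $x$, which recedes as $c$ grows) --- not ``a smaller excluded ball that is more deeply interior.'' In particular the nearest boundary point gives the smallest excluded ball; it neither contains nor dominates the others, so no reduction to the single nearest point by set inclusion is available. What actually completes the argument, and is what the paper does, is to verify the three hypotheses of Lemma~\ref{balllemma} for \emph{every} $c\ge d(x)$ directly: the ratio $r_i/|x_i|$ is independent of $c$, settling $r_i<|x_i|$ once and for all, while the two remaining hypotheses, $|x_i|\ge r_0/\sqrt{2}$ and $r_i^2+|x_i|^2\ge r_0^2$, are lower bounds on quantities proportional to $c$ against the fixed radius $r_0$, hence only become easier as $c$ increases, so the check at $c=d(x)$ suffices. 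Even at $c=d(x)$, equation (\ref{pythagorascondition}) only yields the third hypothesis (with equality at $r=\log(1+\sqrt{3})$); the first hypothesis is a separate inequality, verified in the paper by showing $r_0/|x_i|=e^r-1+\frac{1}{1-e^r}\le\sqrt{3}-\frac{1}{\sqrt{3}}<\sqrt{2}$, and your proposal never addresses it. As written, your proof stops exactly where the real work begins.
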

\begin{proof}
  By \cite[Theorem 1.1]{k1} the $j$-metric ball $\Ball{j}{x}{r}$ is starlike with respect to $x$ and thus close-to-convex for $r \le \log(1+\sqrt{2})$. Therefore we assume $r > \log(1+\sqrt{2})$. By \cite[(4.1)]{k1}
  \begin{equation}\label{intersection}
    \Ball{j}{x}{r} = \bigcap_{z \in \partial G} \Ball{j_{\Rn \setminus \{ z \}}}{x}{r}.
  \end{equation}
  We will prove that $\Ball{j}{x}{r}$ is as $B$ in Lemma \ref{balllemma}. By (\ref{intersection}) $\Ball{j}{x}{r} = B^n(x,r_0) \setminus \bigcup_{i=1}^\infty \overline{B^n}(x_i,r_i)$, where
  \begin{eqnarray}
    r_0 & = & d(x) (e^r-1),\label{r0}\\
    |x_i| & = & c \left( 1+\frac{1}{e^r(e^r-2)} \right) \textnormal{ and}\label{xi}\\
    r_i & = & c \frac{e^r-1}{e^r(e^r-2)}\label{ri}
  \end{eqnarray}
  for some $c \ge d(x)$.

  Firstly, we have $|x_i| \ge r_0/\sqrt{2}$ because by (\ref{r0}) and (\ref{xi})
  \[
    \frac{r_0}{|x_i|} \le \frac{e^r-1}{1+\frac{1}{e^r(e^r-2)}} = e^r-1+\frac{1}{1-e^r} \le \sqrt{3}-\frac{1}{\sqrt{3}} \le \sqrt{2},
  \]
  where the second inequality follows from the fact that the function $e^r-1+1/(1-e^r)$ is increasing on $(0,\infty)$ and $r \le \log(1+\sqrt{3})$.

  Secondly, we have $r_i < |x_i|$ since by (\ref{xi}) and (\ref{ri})
  \[
    \frac{r_i}{c} = \frac{e^r-1}{e^r(e^r-2)} = \frac{1}{e^r(e^r-2)}+\frac{1}{e^r} < \frac{1}{e^r(e^r-2)}+1 = \frac{|x_i|}{c}.
  \]

  Finally, we have $\sqrt{r_i^2+|x_i|^2} \ge r_0$ because by (\ref{r0}), (\ref{xi}) and (\ref{ri})
  \[
    r_i^2+|x_i|^2 = \frac{(e^r-1)^2(e^{2r}-2e^r+2)}{e^{2r}(e^r-2)^2} \ge 3 \ge (e^r-1)^2 = r_0^2,
  \]
  where the first inequality follows from the fact that the function $((e^r-1)^2(e^{2r}-2e^r+2))/(e^{2r}(e^r-2)^2)$ is decreasing on $(\log(1+\sqrt{2}),\infty)$. Now the assertion follows from Lemma \ref{balllemma}.
\end{proof}

The close-to-convexity bound of Theorem \ref{jballsinG} is illustrated in two simple domains in Figure \ref{jdiskfig}.

\begin{figure}[htp]
  \begin{center}
    \includegraphics[height=5cm]{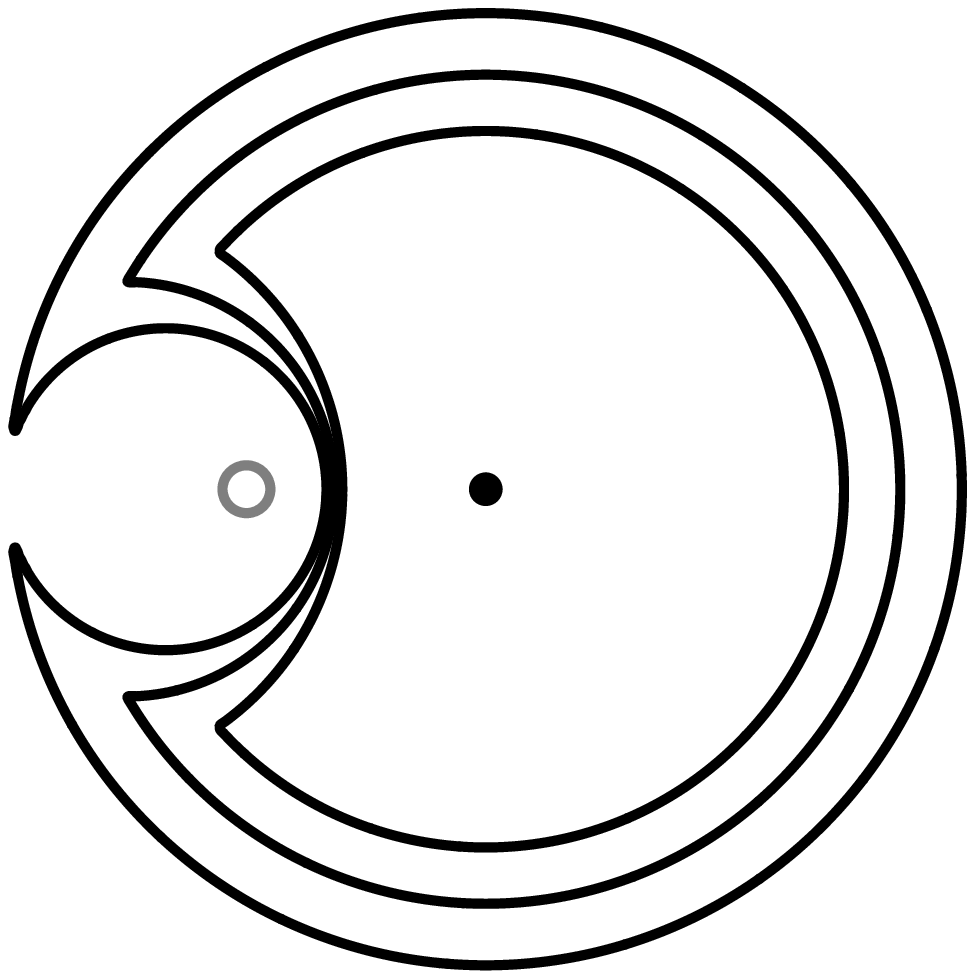}\hspace{1cm}
    \includegraphics[height=5cm]{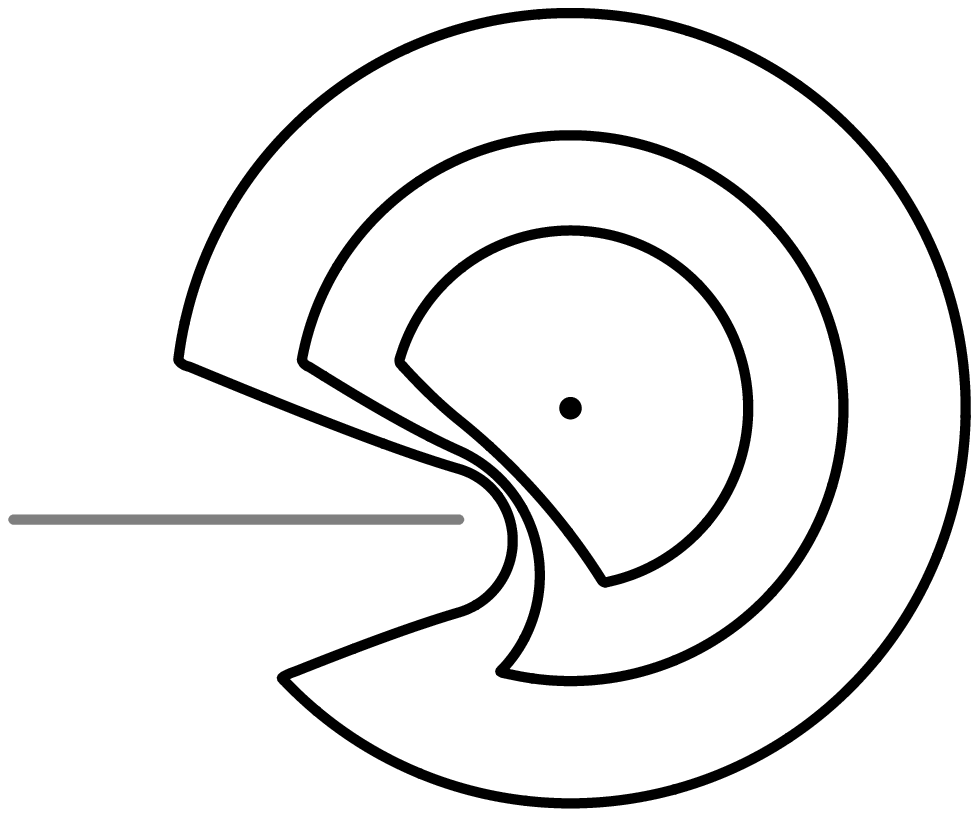}
    \caption{An example of $j$-metric disks in a punctured plane and in a slit plane. The radii of the disks are $\log(1+\sqrt{3})-r_0$, $\log(1+\sqrt{3})$ and $\log(1+\sqrt{3})+r_0$ for small $r_0 > 0$.\label{jdiskfig}}
  \end{center}
\end{figure}

\begin{remark}
  (1) In \cite[Remark 4.9]{k1} is given an example of a domain such that there exists disconnected $j$-metric balls $\Ball{j}{x}{r}$ whenever $r > \log 3$. We will give an example of a domain where the bound $\log 3$  is improved to $\log (1+\sqrt{3})$, which is also the best possible.

  By Theorem \ref{jballsinG} each $\Ball{j}{x}{r}$ in any domain is close-to-convex and connected for all $r \le \log (1+\sqrt{3})$. However, $\Ball{j}{x}{r}$ need not be connected for $r > \log (1+\sqrt{3})$. An example of such a domain and $j$-metric ball is $\C \setminus \{ -1,1 \}$ and $\Ball{j}{x}{r}$ for $x = \sqrt{3}i$ and $r \in ( \log (1+\sqrt{3}),\log (29/10))$. Now $y = -\sqrt{3}i$ is in $\Ball{j}{x}{r}$, because
  \[
    j(x,y) = \log \left( 1+\frac{2\sqrt{3}}{2} \right) = \log(1+\sqrt{3}) < r.
  \]
  We will show that $z \notin \Ball{j}{x}{r}$ for any $z$ with $\Im z = -1$. Let us denote $z = h-i$ for $h \in \R$. By symmetry of the domain it is sufficient to consider $h \ge 0$. If $h > 1+\sqrt{3}$, then
  \begin{eqnarray*}
    j(x,z) & = & \log \left( 1+\frac{|x-z|}{2} \right) = \log \left( 1+\frac{\sqrt{(1+\sqrt{3})^2+h^2}}{2} \right)\\
    & \ge & \log \left( 1+\frac{\sqrt{(1+\sqrt{3})^2+(1+\sqrt{3})^2}}{2} \right) = \log \left( 1+\frac{\sqrt{3}+1}{\sqrt{2}} \right)\\
    & \ge & \log \frac{29}{10},
  \end{eqnarray*}
  because the function $\sqrt{(1+\sqrt{3})^2+h^2}$ is clearly increasing on $(0,\infty)$.

  For $h \in [0,1+\sqrt{3}]$ we have
  \[
    \log(x,z) = \log \left( 1+\frac{\sqrt{(\sqrt{3}+1)^2+h^2}}{\sqrt{1+(h-1)^2}} \right) = \log \left( 1+\sqrt{\frac{4+2\sqrt{3}+h^2}{h^2-2h+2}} \right)
  \]
  and we denote $f(h) = (4+2\sqrt{3}+h^2)/(h^2-2h+2)$. By a straightforward computation $f'(h) = 0$ on $[0,\sqrt{3}]$ if and only if $h = 2\sqrt{2+\sqrt{3}}-1-\sqrt{3}$. Therefore $f(h) \ge \min \{ f(0),f(2\sqrt{2+\sqrt{3}}-1-\sqrt{3},f(\sqrt{3})) \} = f(0)$ on $[0,\sqrt{3}]$. We conclude that
  \[
    \log(x,z) = \log(1+\sqrt{f(h)}) \ge \log(1+\sqrt{f(0)}) = \log \left( 1+\frac{\sqrt{3}+1}{\sqrt{2}} \right) \ge \log \frac{29}{10}
  \]
  and therefore $z \notin \Ball{j}{x}{r}$.
  
  It can be shown that $\Ball{j}{x}{r}$ is disconnected for all $r \in ( \log (1+\sqrt{3}),\log 3)$ and the example is easy to generalize for higher dimensions $n > 2$.

  (2) For any $m \in \N$ there exists a domain and a $j$-metric ball that has exactly $m$ components \cite{k3}.

  (3) Note that there exists a domain $G \subsetneq \R^3$ and $x \in G$ such that $\Ball{j}{x}{r}$ is not close-to-convex for $r > \log(1+\sqrt{2+\sqrt{2}})$. An example of such a domain and point are $G = \R^3 \setminus \{ a+b i,a-b i\}$, where $a = \sqrt{1+1/\sqrt{2}}$ and $b = \sqrt{1-1/\sqrt{2}}$, and $x = 0$.

  Let us first consider domain $D = B^n \setminus \overline{B^n(e_1 3/4,1/2)}$. In the case $n = 2$ clearly $D$ is not close-to-convex, but in the case $n = 3$ we may fill the hole of the complement of the domain $D$ by lines that form cut hyperboloids of one sheet with semi-minor axis $\{ z = t e_1, \, t \in \R \}$. Therefore, we can express $\Rn \setminus \overline{D}$ as a union of half-lines.

  In the case on the domain $G$ a similar construction of cut hyperboloids fail, because the two holes of $\R^3 \setminus \overline{G}$ are too close to each other. If we try to fill the holes as in the case of $D$ some of the cut hyperboloids will intersect and $D$ is not close-to-convex.
\end{remark}

\section{Close-to-convexity of quasihyperbolic balls in punctured space}

We define a constant $\lambda$ to be the solution of the equation
\begin{equation}\label{lambda}
  \cos \sqrt{z^2-1} + \sqrt{z^2-1} \sin \sqrt{z^2-1} = 0
\end{equation}
for $z \in (2,\pi)$. The next proposition shows that (\ref{lambda}) has exactly one solution and its numerical approximation is $\lambda \approx 2.97169$.

\begin{proposition}
  The function
  \[
    f(z) = \cos \sqrt{z^2-1} + \sqrt{z^2-1} \sin \sqrt{z^2-1}
  \]
  has exactly one zero on $(2,\pi)$.
\end{proposition}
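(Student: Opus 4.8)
The natural first move is the substitution $t = \sqrt{z^2-1}$, which maps $(2,\pi)$ strictly monotonically onto the interval $(\sqrt{3},\sqrt{\pi^2-1})$, since its derivative $z/\sqrt{z^2-1}$ is positive there. Because this substitution is a bijection, $f$ has exactly one zero on $(2,\pi)$ if and only if the function $g(t) = \cos t + t\sin t$ has exactly one zero on $(\sqrt{3},\sqrt{\pi^2-1})$. I would therefore reduce the whole problem to studying $g$ on this image interval.

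The key observation is the clean identity $g'(t) = -\sin t + \sin t + t\cos t = t\cos t$, so on $(0,\infty)$ the sign of $g'$ is governed entirely by $\cos t$. The point that makes everything work is to localize the interval: one checks $\sqrt{3} > \pi/2$ (equivalently $3 > \pi^2/4$) and $\sqrt{\pi^2-1} < \pi < 3\pi/2$, so that $(\sqrt{3},\sqrt{\pi^2-1}) \subset (\pi/2,3\pi/2)$. On this range $\cos t < 0$, hence $g'(t) < 0$ and $g$ is strictly decreasing on $(\sqrt{3},\sqrt{\pi^2-1})$. In particular $g$ has \emph{at most} one zero there.

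For existence I would verify the two endpoint signs. At the left endpoint $t = \sqrt{3} \in (\pi/2,\pi)$ the term $\sqrt{3}\sin\sqrt{3}$ dominates $|\cos\sqrt{3}|$, giving $g(\sqrt{3}) > 0$, while at the right endpoint $t = \sqrt{\pi^2-1}$ (still below $\pi$) one has $g(\sqrt{\pi^2-1}) < 0$. Continuity and the intermediate value theorem then produce a zero, and strict monotonicity forces it to be unique. Translating back through the increasing substitution yields the single zero $\lambda$ of $f$ on $(2,\pi)$, consistent with $\lambda \approx 2.97169$.

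The only genuine obstacle is the pair of endpoint sign estimates, which are not entirely immediate because near $t = \pi$ the competing terms $\cos t$ and $t\sin t$ are of comparable magnitude. I would settle these with explicit rational bounds on $\cos\sqrt{3},\sin\sqrt{3}$ and on $\cos\sqrt{\pi^2-1},\sin\sqrt{\pi^2-1}$ (or, equivalently, note that the numerical value $\lambda \approx 2.97169$ lies strictly between $\sqrt{3}$ and $\sqrt{\pi^2-1}$ in $t$-coordinates, confirming both signs). Everything else — the monotonicity and hence the at-most-one-zero conclusion — is forced cleanly by the identity $g'(t) = t\cos t$ once the interval is seen to sit inside $(\pi/2,3\pi/2)$.
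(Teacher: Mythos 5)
Your proposal is correct and is essentially the paper's own proof: the paper differentiates $f$ directly to get $f'(z) = z\cos\sqrt{z^2-1} < 0$ on $(2,\pi)$ (the same computation as your $g'(t) = t\cos t$, transported by the chain rule through your substitution), and then concludes exactly as you do from strict monotonicity together with the endpoint signs $f(2) > 0$ and $f(\pi) < 0$. The paper, like you, leaves the endpoint sign checks as straightforward numerical verifications, so your extra care there is fine but not a divergence in method.
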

\begin{proof}
  By a simple computation $f'(z) = z \cos \sqrt{z^2-1} < 0$, because by assumption $\sqrt{z^2-1} \in (\sqrt{3},\sqrt{\pi^2-1}) \subset (\pi/2,\pi)$. Therefore $f(z)$ is continuous and strictly decreasing on $(2,\pi)$. Because $f(2) > 0$ and $f(\pi) < 0$, the function $f(z)$ has exactly one zero on $(2,\pi)$.
\end{proof}

We will find the radius of close-to-convexity of quasihyperbolic disks in a punctured plane.
\begin{theorem}\label{ctcthmforPP}
  Let $G = \PP$, $r > 0$ and $x \in G$. The quasihyperbolic disk $\Ball{k}{x}{r}$ is close-to-convex for $r \le \lambda$ is not close-to-convex for $r > \lambda$.
\end{theorem}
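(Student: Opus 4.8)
```latex
\textbf{Proof proposal.} The plan is to use the explicit Martin--Osgood formula (\ref{moeq}) to write down the quasihyperbolic disk $\Ball{k}{x}{r}$ as an explicit planar region and then analyze its boundary curve in polar coordinates centered at the origin. By the rotational symmetry of $G = \PP$ and the scaling invariance of $k_G$ under $z \mapsto tz$, we may normalize $x = e_1$. Using (\ref{moeq}), a point $y = \rho e^{i\theta}$ (with $\theta \in [-\pi,\pi]$ the signed angle from $[x,0]$) lies in $\Ball{k}{e_1}{r}$ precisely when $\theta^2 + \log^2 \rho < r^2$, so the boundary of the disk is the closed curve $\rho(\theta) = \exp\bigl(\pm\sqrt{r^2 - \theta^2}\bigr)$ for $|\theta| \le r$ when $r \le \pi$ (and the disk wraps around the origin once $r > \pi$, which is why we restrict to $r \le \lambda < \pi$). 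I would first record this explicit description of the disk and its boundary.

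The geometric heart of the matter is that close-to-convexity of a planar domain is equivalent to covering its complement by non-intersecting half-lines. First I would observe that the origin is in the complement and that the complement has two pieces: the bounded ``inner'' hole (the region enclosed by the inner boundary branch $\rho = \exp(-\sqrt{r^2-\theta^2})$, which contains $0$) and the unbounded ``outer'' region. The unbounded outer region is easy to cover by radial half-lines emanating outward, so the crux is whether the inner hole together with the origin can be covered by non-intersecting half-lines compatibly with the outer covering. The natural choice is to cover everything by half-lines through the origin (radial rays), since these are automatically non-intersecting away from $0$. The domain fails to be close-to-convex exactly when some radial ray from the origin re-enters $\Ball{k}{x}{r}$ after leaving it, i.e. when the disk is not \emph{starlike-complement-friendly} with respect to the origin; equivalently, close-to-convexity via radial rays holds iff each ray meets the disk in a single radial interval, which is governed by the convexity/concavity of the boundary curve as seen from $0$.

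The decisive computation is therefore a local convexity analysis of the boundary curve $\rho(\theta) = \exp\bigl(\sqrt{r^2-\theta^2}\bigr)$ (the relevant outer branch) viewed from the origin. For a polar curve $\rho(\theta)$, the signed curvature relative to the center $0$ changes sign where the quantity $\rho^2 + 2(\rho')^2 - \rho\,\rho''$ vanishes; substituting $\rho = e^{\sqrt{r^2-\theta^2}}$ and simplifying should, after setting $z = r$ and letting the critical angle run to its extreme, reduce precisely to the equation $\cos\sqrt{z^2-1} + \sqrt{z^2-1}\,\sin\sqrt{z^2-1} = 0$ defining $\lambda$ in (\ref{lambda}). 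The point is that for $r \le \lambda$ the radial covering remains valid (no ray re-enters the disk), whereas for $r > \lambda$ a tangency failure appears and one produces a concrete ray that intersects $\Ball{k}{x}{r}$ in two separated intervals, so no family of non-intersecting half-lines can cover the complement. \emph{The main obstacle} I anticipate is this curvature/tangency computation: translating the qualitative ``rays re-enter the disk'' obstruction into the exact transcendental equation (\ref{lambda}) requires a careful differentiation of $\rho(\theta)$ (note $\rho'$ and $\rho''$ involve $\theta/\sqrt{r^2-\theta^2}$, which blows up as $\theta \to r$, so the limiting behavior at the endpoint must be handled), and then verifying that the first sign change as $r$ increases occurs exactly at the root $\lambda \in (2,\pi)$ rather than at some spurious earlier angle. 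The sharpness direction for $r > \lambda$ then follows by exhibiting the offending ray explicitly.
```
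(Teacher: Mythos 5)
Your covering strategy is not viable, and the criterion you extract from it is vacuous. In the relevant range $r \in (2,\pi]$ (note $\lambda \in (2,\pi)$), the disk $\Ball{k}{e_1}{r}$ occupies exactly the angular sector $|\theta| < r$ and meets each ray from the origin in that sector in the single radial interval $e^{-\sqrt{r^2-\theta^2}} < \rho < e^{\sqrt{r^2-\theta^2}}$. Two consequences. First, no radial ray ever ``re-enters'' the disk, so your proposed characterization (``fails to be close-to-convex exactly when some radial ray re-enters'') would declare every such disk close-to-convex, contradicting the sharpness half of the statement. Second, since $r > \pi/2$, every straight line through the origin has at least one of its two opposite rays inside the sector $|\theta| < r$; hence a point of the complement near the origin at an angle $\theta$ with $\pi - r < |\theta| < r$ lies on no half-line through the origin that avoids the disk, so radial half-lines cannot cover the complement near $0$ at all. (Also, for $2 < r < \pi$ the complement is connected --- the region around the origin is joined to infinity through the wedge $|\theta| > r$ --- not ``two pieces''; if the inner part really were a bounded component, no half-line could cover it.) The covering that actually works consists of half-lines escaping through that narrow wedge around the negative real axis, essentially horizontal ones, and it is this choice of direction that dictates the analytic condition.

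Your analytic condition likewise fails to produce (\ref{lambda}). With $u = \sqrt{r^2-\theta^2}$, a direct computation gives for the outer branch $\rho = e^{u}$ that $\rho^2 + 2(\rho')^2 - \rho\rho'' = e^{2u}\left(1 + \theta^2/u^2 + r^2/u^3\right) > 0$, so there is no sign change anywhere; and for the inner branch $\rho = e^{-u}$ that $\rho^2 + 2(\rho')^2 - \rho\rho'' = e^{-2u}\left(1 + \theta^2/u^2 - r^2/u^3\right)$, which vanishes precisely when $u = 1$ --- a point that exists for every $r > 1$, independently of $r$. So the curvature analysis yields no equation in $r$ whatsoever (and you are in any case looking at the wrong branch: the obstruction lives on the inner boundary, the one bounding the hole around the origin). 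What gives (\ref{lambda}) in the paper is a directional, not a curvature, condition: writing the upper boundary as $y(s) = (e^s\cos\p(s), e^s\sin\p(s))$ with $\p(s) = \sqrt{r^2-s^2}$, close-to-convexity is governed by whether the height $e^s\sin\p(s)$ is monotone on the inner part $s \in (-r,0)$, i.e.\ whether the tangent component $b(s) = \p(s)\sin\p(s) - s\cos\p(s)$ keeps a constant sign there; since $b'(s) = -(1+s)a(s)/\p(s)$ with $a(s) = \p(s)\cos\p(s) + s\sin\p(s) < 0$ on $(-r,0)$, the function $b$ attains its minimum at $s = -1$, and $b(-1) = \cos\sqrt{r^2-1} + \sqrt{r^2-1}\,\sin\sqrt{r^2-1} \ge 0$ is exactly the condition $r \le \lambda$. (The critical point $s = -1$ is incidentally the same point $u = 1$ where your curvature expression vanishes, but the relevant phenomenon there is the tangent becoming horizontal, not the curvature changing sign.) Finally, a complete proof also needs the paper's reduction to $r \in (2,\pi]$: for $r \le 2$ the disk is starlike with respect to $x$ by \cite[Theorem 1.1 (2)]{k2}, hence close-to-convex, and for $r > \pi$ it is not simply connected, so the complement has a bounded component that no half-line can cover.
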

\begin{proof}
  We may assume that $r \in (2,\pi]$, because for $r \in (0,2]$ the quasihyperbolic disk is starlike with respect to $x$ \cite[Theorem 1.1 (2)]{k2} and thus close-to-convex and for $r > \pi$ the quasihyperbolic disk is not simply connected \cite[Remark 4.8]{k2} and thus not close-to-convex. By symmetry of $G$ we may assume $x = e_1$ and it is sufficient to consider the upper half $U$ of the boundary $\partial \Ball{k}{x}{r}$. By (\ref{moeq})
  \[
    U = \{ y = y(s) \in \R^2 \colon y(s) = (e^s \cos \p(s),e^s \sin \p(s)) \},
  \]
  where $\p(s) = \sqrt{r^2-s^2}$ and $s \in [-r,r]$. Since $\p'(s) = -s/\p(s)$ we obtain for $s \in (-r,r)$
  \[
    y'(s) = \frac{e^s(a(s),b(s))}{\p(s)},
  \]
  where $a(s) = \p(s)\cos \p(s) + s \sin \p(s)$ and $b(s) = \p(s) \sin \p(s) - s \cos \p(s)$. To prove the close-to-convexity we need to find the largest possible $r$ such that the tangent vector $(a(s),b(s))$ of $U$ is parallel to the real axis exactly once for $s \in (-r,0)$. Since
  \[
    b'(s) = \frac{-(1+s)a(s)}{\varphi(s)}
  \]
  and $a(s) < 0$ for all $s \in (-r,0)$ we have $b'(s) = 0$ if and only if $s = -1$. Therefore we want to find the greatest $r$ such that $b(-1) = 0$, but this condition is equivalent to (\ref{lambda}) and the assertion follows.
\end{proof}

The close-to-convexity bound of Theorem \ref{ctcthmforPP} is illustrated in Figure \ref{kdiskfig}.

\begin{figure}[htp]
  \begin{center}
    \includegraphics[height=5cm]{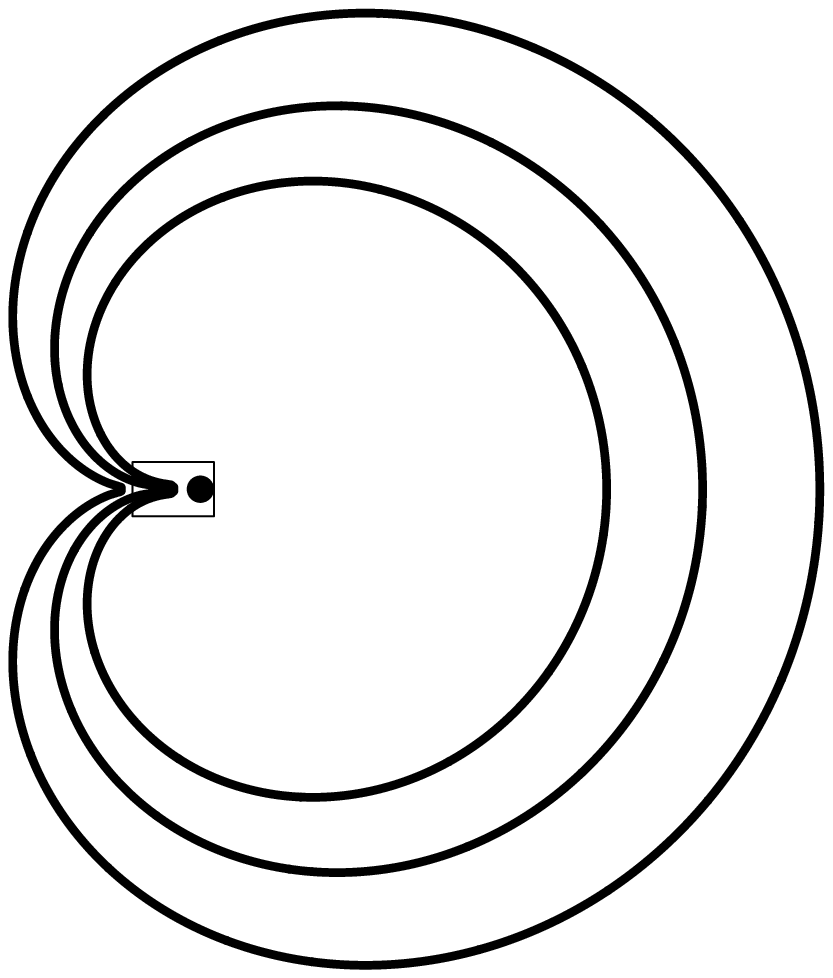}\hspace{1cm}
    \includegraphics[height=5cm]{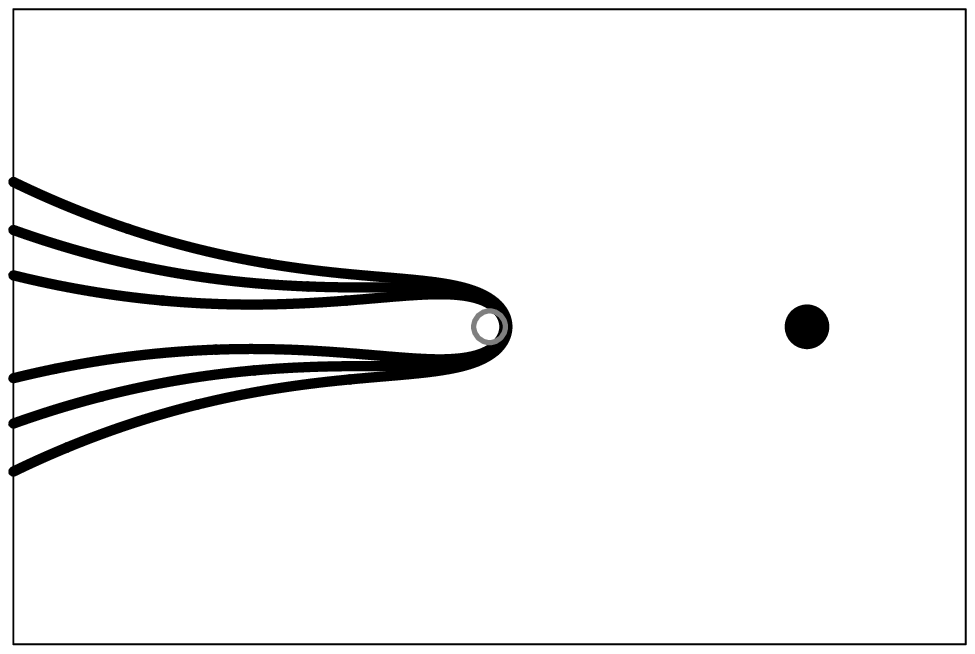}
    \caption{An example of quasihyperbolic disks in punctured plane. The radii of the disks are $\lambda-r_0$, $\lambda$ and $\lambda+r_0$ for small $r_0 > 0$.\label{kdiskfig}}
  \end{center}
\end{figure}

\begin{lemma}\label{symmetrylemma}
  If the domain $G \subset \Rn$ is invariant under rotation about a line $l$ and $G \cap L$ is close-to-convex for every plane $L$ with $l \subset L$, then $G$ is close-to-convex.
\end{lemma}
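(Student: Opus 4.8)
The plan is to reduce the close-to-convexity of the rotationally symmetric domain $G$ to the close-to-convexity of its planar slices, which we are given as a hypothesis. Recall that $G$ is close-to-convex precisely when $\Rn \setminus G$ can be covered by a family of pairwise non-intersecting half-lines. So the goal is to assemble such a covering of $\Rn \setminus G$ out of coverings of the complements $L \setminus (G \cap L)$ inside each plane $L$ containing the axis $l$.

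First I would fix the rotation axis $l$ and consider the pencil of all planes $L$ containing $l$; each such plane is obtained from a fixed reference half-plane by rotating about $l$ through some angle $\theta \in [0,\pi)$ (a plane is a union of two opposite half-planes). By hypothesis, for each such plane $L$ the set $G \cap L$ is close-to-convex as a subset of $L \cong \R^2$, so there is a covering of $L \setminus (G \cap L)$ by non-intersecting half-lines lying in $L$. The key structural observation is that, because $G$ is invariant under all rotations about $l$, the slice $G \cap L$ is the same planar set for every $L$ (up to the rigid rotation identifying the planes), and $\Rn \setminus G$ is exactly the union over all planes $L \supset l$ of the planar complements $L \setminus (G \cap L)$. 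Thus every point of $\Rn \setminus G$ lies in at least one slicing plane and is therefore covered by one of the half-lines coming from that plane's covering.

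The second step is to verify that the resulting global family of half-lines is non-intersecting, which is the main obstacle. Two half-lines coming from the \emph{same} plane $L$ do not meet by the choice of covering in that plane. Two half-lines coming from \emph{different} planes $L$ and $L'$ could a priori intersect only along the common axis $l = L \cap L'$, so the only danger is half-lines that touch $l$. I would handle this by choosing the planar coverings coherently: use the rotational symmetry to pick, in each plane $L$, the rotated image of one fixed covering of the reference slice, so that the whole family is itself rotation-invariant about $l$. With this coherent choice, two half-lines from distinct planes are genuine rotations of each other through a nonzero angle, hence they can share a point only if that point lies on the axis $l$ and is a fixed point of the rotation. One then observes that any half-line from a planar covering that actually meets $l$ must, by symmetry of the rotation-invariant complement, be treatable as a single line or be split so that distinct rotated copies meet $l$ in disjoint half-lines; after this adjustment no two half-lines from different planes intersect off the axis, and the on-axis points are covered at most once.

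To finish, I would note that every point of $\Rn \setminus G$ is covered (by the second paragraph) and no two chosen half-lines intersect (by the third paragraph), which is exactly the definition of close-to-convexity for $G$. The delicate bookkeeping is entirely at the axis $l$: away from $l$ the rotation has no fixed points and the coherent rotated copies are automatically disjoint, so the real content is ensuring the coverings of the various slices are compatible along the common axis. I expect this axis analysis to be the crux of the argument, while the reduction to slices and the covering of points off the axis are essentially immediate from the rotational invariance of $G$.
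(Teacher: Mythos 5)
Your overall reduction is the same as the paper's: slice $\Rn \setminus G$ by the pencil of planes through $l$, use the hypothesis to cover each planar piece by disjoint half-lines, and glue, the only danger being at the axis. The genuine gap is in your treatment of that danger. Your plan is to take one cover of the reference slice, rotate it to all planes, and then repair collisions on $l$ by ``splitting'' or similar adjustments. These repairs do not work. If a half-line of the planar cover crosses $l$ at an interior point $p$, then all of its rotated copies pass through $p$, and splitting at $p$ produces a bounded segment plus a half-line; a segment is not a half-line, so the repaired family no longer witnesses close-to-convexity. Moreover, nothing prevents the hypothesis from handing you a cover in which essentially every half-line crosses $l$. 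Take $G = \{(x,y,z) \in \R^3 \colon \sqrt{y^2+z^2} > x\}$, the exterior of a solid cone, with $l$ the $x$-axis: the slice complement in the $xy$-plane is $E = \{(x,y) \colon |y| \le x\}$, and the half-lines starting at $(t,-t)$, $t \ge 0$, with direction $(1,1)$ form a perfectly valid disjoint cover of $E$ in which every half-line with $t > 0$ crosses $l$ transversally at $(2t,0)$. Rotating this cover about $l$ creates intersections at interior points of every one of these half-lines, and none of your proposed adjustments can remove them while keeping a family of half-lines that still covers everything.

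What is actually needed --- and what the paper's proof supplies --- is a normalization of each planar cover before any glueing: since $G \cap L$ is symmetric under the reflection of $L$ across $l$ (this is the rotation by $\pi$ about $l$), one can choose the cover of $(\Rn \setminus G) \cap L$ so that every half-line meeting $l$ is entirely contained in $l$. In the cone example this means discarding the diagonal cover in favour of the horizontal rays $\{(x,y_0) \colon x \ge |y_0|\}$, of which only the one with $y_0 = 0$ meets $l$, and it lies in $l$. Once every planar cover has this property, the glueing is immediate: the half-lines inside $l$ are common to all planes and are used once, while all remaining half-lines lie in the open half-planes $L \setminus l$, which are pairwise disjoint for distinct planes, so half-lines from different planes cannot meet and every point of $\Rn \setminus G$ stays covered. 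Your rotation-coherence idea is compatible with this, but without establishing the existence of such a normalized cover --- which is the real content of the lemma --- the proposal does not close.
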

\begin{proof}
  By assumption each plane $L$ is symmetric about the line $l$ and therefore $(\Rn \setminus G) \cap L$ can be covered by non-intersecting half-lines so that if a half-line $h$ of the cover intersects $l$ then $h \subset l$. Now we can combine the covers of $(\Rn \setminus G) \cap L$ for all $L$ to construct a cover consisting of non-intersecting half-lines for $\Rn \setminus G$. Therefore $G$ is close-to-convex.
\end{proof}

\begin{corollary}\label{kballsinPS}
  Let $G = \PS$, $r > 0$ and $x \in G$. The quasihyperbolic ball $\Ball{k}{x}{r}$ is close-to-convex for $r \le \lambda$.
\end{corollary}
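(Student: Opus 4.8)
The plan is to reduce Corollary \ref{kballsinPS} to the planar case already settled in Theorem \ref{ctcthmforPP} by exploiting the rotational symmetry of both the punctured space $\PS$ and the quasihyperbolic ball $\Ball{k}{x}{r}$, and then to invoke Lemma \ref{symmetrylemma}. The formula (\ref{moeq}) for the quasihyperbolic metric in $\PS$ depends only on the quantities $|x|$, $|y|$, and the angle $\alpha$ between the segments $[x,0]$ and $[0,y]$; in particular it is invariant under any rotation of $\Rn$ fixing the origin. Consequently $\Ball{k}{x}{r}$ is determined entirely by these rotationally invariant data, and the ball inherits a rotational symmetry whose axis is the line $l$ through the origin and the center $x$.

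First I would normalize: by the rotational symmetry of $\PS$ we may take $x = |x|\,e_1$, so that the axis of symmetry is $l = \{ t e_1 \colon t \in \R \}$. I would then argue that $\Ball{k}{x}{r}$ is invariant under every rotation of $\Rn$ about $l$. Indeed, if $\rho$ is such a rotation then $\rho$ fixes the origin and fixes $x$, so for any $y$ it preserves $|y|$, preserves $|\rho y| = |y|$, and preserves the angle between $[x,0]$ and $[0,y]$ (since it fixes both $x$ and $0$ and is an isometry). By (\ref{moeq}) this gives $k_\PS(x,\rho y) = k_\PS(x,y)$, so $\rho$ maps $\Ball{k}{x}{r}$ onto itself. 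This verifies the first hypothesis of Lemma \ref{symmetrylemma}.

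Next I would check the second hypothesis, namely that the slice $\Ball{k}{x}{r} \cap L$ is close-to-convex for every plane $L$ containing $l$. The key observation is that such a two-dimensional slice is itself a quasihyperbolic disk in a punctured plane. Concretely, $L$ is a plane through the origin containing $e_1$; restricting the distance function $d(\,\cdot\,,\partial \PS) = |\,\cdot\,|$ to $L$ shows that $L \cap \PS$ is exactly the punctured plane $L \setminus \{0\} \cong \PP$, and for points $y \in L$ the angle $\alpha$ in (\ref{moeq}) is measured within $L$. Hence $k_\PS(x,y)$ restricted to $y \in L$ coincides with $k_{L \setminus \{0\}}(x,y)$, and the slice $\Ball{k}{x}{r} \cap L$ is precisely the quasihyperbolic disk of radius $r$ centered at $x$ in the punctured plane $L \setminus \{0\}$. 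By Theorem \ref{ctcthmforPP} this disk is close-to-convex for $r \le \lambda$.

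With both hypotheses of Lemma \ref{symmetrylemma} verified, the conclusion follows: $\Ball{k}{x}{r}$ is close-to-convex for $r \le \lambda$. The main obstacle, and the step deserving the most care, is the identification of the slice with a genuine planar quasihyperbolic disk; one must confirm that the quasihyperbolic metric of $\PS$ restricted to the plane $L$ agrees with the intrinsic quasihyperbolic metric of the punctured plane $L \setminus \{0\}$. This is exactly what the explicit formula (\ref{moeq}) delivers, since the right-hand side is expressed purely in terms of the data $|x|, |y|, \alpha$, all of which are computed within $L$ whenever $x,y \in L$. The remaining details are the routine symmetry bookkeeping already packaged into Lemma \ref{symmetrylemma}.
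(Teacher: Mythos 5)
Your proof is correct and takes exactly the route the paper does: the paper's proof of Corollary \ref{kballsinPS} is the single sentence ``The assertion follows from Theorem \ref{ctcthmforPP} and Lemma \ref{symmetrylemma}.'' Your write-up simply makes explicit the two hypotheses of Lemma \ref{symmetrylemma} --- rotational invariance of the ball about the axis through $0$ and $x$, and the identification of each planar slice with a genuine quasihyperbolic disk in a punctured plane via (\ref{moeq}) --- both of which the paper leaves to the reader.
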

\begin{proof}
  The assertion follows from Theorem \ref{ctcthmforPP} and Lemma \ref{symmetrylemma}.
\end{proof}

\begin{remark}
  It can be shown by numerical computation that the quasihyperbolic balls cannot be close-to-convex in $\PS$ for radius greater than or equal to $3.1116$. In this case there exists points $y \in \partial \Ball{k}{x}{3.1116}$ such that $y$ is not an end point of any half-lines in $\Rn \setminus \overline{\Ball{k}{x}{3.1116}}$.

  Quasihyperbolic balls $\Ball{k}{x}{r}$ in $\PS$ are not close-to-convex for $r > \pi$, because $\Rn \setminus \overline{\Ball{k}{x}{r}}$ consists of two disconnected component and one of the components is bounded.
\end{remark}

\begin{proof}[Proof of Theorem \ref{mainthm}]
  The assertion follows from Theorem \ref{jforPS}, Theorem \ref{jballsinG}, Theorem \ref{ctcthmforPP} and Corollary \ref{kballsinPS}.
\end{proof}

In Table \ref{ksummary} and Table \ref{jsummary} the known radii of convexity, starlikeness and close-to-convexity for the quasihyperbolic and the $j$-metric balls are presented.
\begin{table}[h!]
  \begin{center}\begin{tabular}{c|c|c|c}
    domain & convex & starlike w.r.t $x$ & close-to-convex\\
    \hline
    $\PP$ & 1 \cite{k1} & $\kappa \approx 2.83$ \cite{k1} & $\lambda \approx 2.97$ (Thm \ref{ctcthmforPP})\\
    $\PS$ & 1 \cite{k1} & $\kappa \approx 2.83$ \cite{k1} & $\lambda^* \approx 2.97$ (Cor. \ref{kballsinPS})\\
    convex & $\infty$ \cite{mv} & $\infty$ \cite{mv} & $\infty$ \cite{mv}\\
    starlike w.r.t. $x$ & ? & $\infty$ \cite{k1} & $\infty$ \cite{k1}\\
    general ($n = 2$) & 1 \cite{v2} & $\pi/2^*$ \cite{v1} & $\pi/2^*$ \cite{v1}\\
    general ($n \ge 2$) & ? & $\pi/2^*$ \cite{v1} & $\pi/2^*$ \cite{v1}\\
  \end{tabular}\end{center}
  \caption{\label{ksummary} The known radii of convexity, starlikeness and close-to-convexity for the quasihyperbolic balls $\Ball{k}{x}{r}$. Notation $r^*$ means that the radius $r$ is not sharp.}
\end{table}
\begin{table}[h!]
  \begin{center}\begin{tabular}{c|c|c|c}
    domain & convex & starlike w.r.t $x$ & close-to-convex\\
    \hline
    convex & $\infty$ \cite{k2} & $\infty$ \cite{k2} & $\infty$ \cite{k2}\\
    starlike w.r.t. $x$ & $\log 2$ \cite{k2} & $\infty$ \cite{k2} & $\infty$ \cite{k2}\\
    general ($n = 2$) & $\log 2$ \cite{k2} & $\log (1+\sqrt{2})$ \cite{k2} & $\log (1+\sqrt{3})$ (Thm \ref{jforPS})\\
    general ($n \ge 2$) & $\log 2$ \cite{k2} & $\log (1+\sqrt{2})$ \cite{k2} & $\log (1+\sqrt{3})^*$ (Thm \ref{jballsinG})\\
  \end{tabular}\end{center}
  \caption{\label{jsummary} The known radii of convexity, starlikeness and close-to-convexity for the $j$-metric balls $\Ball{j}{x}{r}$. Notation $r^*$ means that the radius $r$ is not sharp.}
\end{table}

Finally we pose some open problems concerning the close-to-convexity of the quasihyperbolic and the $j$-metric balls. Naturally similar questions can be asked for other hyperbolic type metrics.
\begin{itemize}
\item Let $G \subsetneq \Rn$ be a domain and $n > 2$. What is the sharp bound of radius $r_j$ such that $\Ball{j}{x}{r}$ is close-to-convex for all $x \in G$ and $r \in (0,r_j]$?

  \item Are the $j$-metric balls close-to-convex in close-to-convex domains for all radii $r > 0$?

  \item Let $G = \PS$ and $n > 2$. What is the sharp bound of radius $r_k$ such that $\Ball{k}{x}{r}$ is close-to-convex for all $x \in G$ and $r \in (0,r_k]$?

  \item Are the quasihyperbolic balls $\Ball{k}{x}{r}$ close-to-convex for $r \in (0,\lambda]$ in any domain $G \subsetneq \Rn$?

  \item Are the quasihyperbolic balls close-to-convex in close-to-convex domains for all radii $r > 0$?
\end{itemize}

\textbf{Acknowledgements.} The work was supported by the Finnish National Graduate School in Mathematical Analysis and Its Applications.


{\noindent Department of Mathematics\\
University of Turku\\
FI-20014\\
FINLAND\\
e-mail: riku.klen@utu.fi}

\end{document}